\newcommand{\compactlist}{\begin{list}{$\bullet$}{\setlength{\leftmargin}{1em}}}
\def\zz{{\mathbb Z}}
\def\qq{{\mathbb Q}}
\def\cc{{\mathbb C}}
\def\ss{{\mathbb S}^1}
\def\rr{{\mathbb R}}
\def\cs{\mathbin{\#}}
\newcommand{\spinc}{\ifmmode{{\mathfrak s}}\else{${\mathfrak s}$\ }\fi}
\newcommand{\spinct}{\ifmmode{{\mathfrak t}}\else{${\mathfrak t}$\ }\fi}
\newtheorem{theorem}{Theorem}
\newtheorem{lemma}{Lemma}
\theoremstyle{definition}
\begin{document}
\title{Signature functions of knots}
\author{Charles Livingston}
\thanks{This work was supported by a grant from the National Science Foundation, NSF-DMS-1505586.}
\address{Charles Livingston: Department of Mathematics, Indiana University, Bloomington, IN 47405 }
\email{livingst@indiana.edu}


\begin{abstract}    The signature function of a knot is an integer-valued step function on the unit circle in the complex plane.  Necessary and sufficient conditions for   a function   to be the  signature function of a knot are presented.  
\end{abstract}

\maketitle


\section{Introduction}
 
For a knot $K \subset S^3$, the signature function, $\sigma_K(\omega)$, is an integer-valued step function defined on the unit circle $\ss \subset \cc$. Its discontinuities can occur only at roots of the Alexander polynomial, $\Delta_K(t)\in \zz[t,t^{-1}]$. The function is {\it balanced}, in the sense that for all $t \in \rr$,
 $$\sigma_K(e^{2 \pi i t}) = \frac{1}{2}\lim_{\epsilon\to 0^+} \left( \sigma_K(e^{  2 \pi i (t+\epsilon)}) + \sigma_K(e^{ 2 \pi i (t-\epsilon)} )\right).$$
There is an associated {\it jump} function, $$J_K(e^{2 \pi i t}) = \frac{1}{2}\lim_{\epsilon\to 0^+} \left( \sigma_K(e^{  2 \pi i (t+\epsilon)}) - \sigma_K(e^{ 2 \pi i (t-\epsilon)} )\right).$$

Seifert~\cite{seifert} (see also~\cite{levine2}) characterized the set of polynomials that occur as Alexander polynomials of knots:  if $\Delta(t) \in \zz[t,t^{-1}]$, there exists a knot $K$ such that $\Delta_K(t) = \Delta(t)$ if and only if $\Delta(1) = \pm 1$ and $\Delta(t) =t^k \Delta(t^{-1})$ for some $k \in \zz$.   In general, the Alexander polynomial is well-defined up to multiplication by $\pm t^k$. We refer to any integer polynomial that satisfies these conditions as an {\it Alexander polynomial}. 
 
Here we characterize the set of signature functions of knots.  Recall that two   complex numbers are called {\it Galois conjugate} if they are roots of the same rational irreducible polynomial.
 
 \begin{theorem}\label{thm:main} Let $\sigma$ be a balanced integer-valued step function on $\ss \in \cc$.  Then $\sigma = \sigma_K$ for some knot $K$ if and only if:
 \begin{enumerate}
 \item $\sigma(\omega) = \sigma(\overline{\omega})$ for all $\omega \in \ss$.\vskip.05in
 \item $\sigma(1) = 0$.\vskip.05in
 \item All discontinuities of $\sigma$ occur at roots of Alexander polynomials.\vskip.05in
 \item If $\alpha_1 \in \ss$ and $\alpha_2 \in \ss$ are Galois conjugate, then $J(\alpha_1) \equiv J(\alpha_2) \mod 2$.
 \end{enumerate}
 
 \end{theorem}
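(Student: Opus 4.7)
My plan is to prove the two directions separately. For necessity, conditions (1), (2), and (3) follow from standard properties of the Hermitian form $(1-\omega)V+(1-\bar\omega)V^T$ associated to a Seifert matrix $V$: the signature is invariant under complex conjugation, the form vanishes at $\omega=1$, and it is nondegenerate off the roots of $\Delta_K$. The new necessity condition is (4). I would prove it by interpreting the jump at a root $\alpha$ of an irreducible rational factor $p$ of $\Delta_K$ as the signature of a Hermitian form defined on the $\alpha$-isotypic component of the rationalized Alexander module, a vector space over $\qq(\alpha)$. The dimension of this space controls the parity of the jump and depends only on $p$, not on the chosen root of $p$ on $\ss$; this is the Galois invariance asserted in (4).

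For sufficiency, I use additivity of the signature under connected sum, $\sigma_{K_1\cs K_2}=\sigma_{K_1}+\sigma_{K_2}$, to reduce to realizing elementary pieces. First, split $\sigma=\sum_{\mathcal{O}}\sigma_{\mathcal{O}}$ over Galois orbits $\mathcal{O}\subset\ss$, with $\sigma_{\mathcal{O}}$ supported only on $\mathcal{O}$. Conditions (1)--(4) descend to each $\sigma_{\mathcal{O}}$, so it suffices to realize each summand. If all jumps on $\mathcal{O}$ are even, I further decompose $\sigma_\mathcal{O}$ into jumps of size $\pm 2$ supported at single conjugate pairs $\{\alpha,\bar\alpha\}$, which are realized by a Seifert matrix built from the minimal polynomial $p_\alpha$ together with Levine's realization of Alexander polynomials. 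If the jumps on $\mathcal{O}$ are odd, condition (4) forces them to be nonzero at every conjugate pair in $\mathcal{O}$; after subtracting off corrections of size $\pm 2$, I am left with a signature function whose jumps are $(\epsilon_1,\ldots,\epsilon_n)\in\{\pm 1\}^n$ across the $n$ conjugate pairs of $\mathcal{O}$.

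The main obstacle is this last step: for each Galois orbit $\mathcal{O}$ and each sign pattern $(\epsilon_1,\ldots,\epsilon_n)$, constructing a knot whose signature function has exactly those odd jumps on $\mathcal{O}$ and no other discontinuities. Here condition (4) is sharp, since it prohibits breaking the orbit contribution into pieces supported on proper subsets of $\mathcal{O}$. I expect to carry this out by writing down a rank-one Hermitian form on a free $\qq(\alpha)$-module whose signatures at the $n$ archimedean places of $\qq(\alpha)$ corresponding to the conjugate pairs of $\mathcal{O}$ realize the prescribed $\epsilon_j$, and then passing to an integral representative whose rational transpose pairing yields a valid Seifert matrix. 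Local-global principles for Hermitian forms over number fields guarantee the existence of a form with arbitrary sign pattern, and an integrality refinement then produces the Seifert matrix and hence the knot. This number-theoretic realization, together with its translation back to a genuine integer Seifert form, is the technical core of the sufficiency argument.
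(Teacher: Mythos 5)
Your necessity argument is essentially the paper's: conditions (1)--(3) are standard, and your isotypic-component reading of condition (4) amounts to the same parity count that the paper gets by diagonalizing $W_K$ over $\qq(t)$ and observing that the exponent of the irreducible factor $\delta(t)$ in a diagonal entry determines the jump parity at every root of $\delta$ simultaneously. The sufficiency reduction is also sound as far as it goes: additivity under connected sum, splitting over Galois orbits, and reducing everything to realizing prescribed jumps on the roots of a single irreducible Alexander polynomial. Note, though, that your even case already contains the full difficulty: producing a jump of $2$ at one root $\alpha$ of $\delta_1$ and $0$ at the other roots of $\delta_1$ on $\ss$ cannot be done by ``a Seifert matrix built from the minimal polynomial,'' since any knot whose Alexander polynomial is a power of $\delta_1$ may a priori jump at all of its roots on $\ss$; isolating one root of an irreducible polynomial is exactly the theorem to be proved.

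The genuine gap is the final step of your plan: passing from a rank-one Hermitian form over $\qq(\alpha)$ with prescribed signs at the archimedean places to ``an integral representative whose rational transpose pairing yields a valid Seifert matrix.'' Weak approximation does give a form with an arbitrary sign pattern, but a Seifert matrix is an integral matrix $V$ with $\det(V-V^{\sf T})=\pm 1$, and the map from integral algebraic concordance to rational (or number-field) Witt classes is injective with image a proper subgroup cut out by nontrivial integrality conditions; moreover, any auxiliary factors introduced to repair integrality can create new discontinuities that must then be cancelled. This ``integrality refinement'' is precisely the technical core, and it is asserted rather than proved. The paper avoids the number theory entirely: it takes unknotting number one knots $K_1$, $K_2$, $K_3$ with Alexander polynomials $\delta_1$, $\delta_2$, $\delta_1\delta_2$ (Kondo's theorem), where $\delta_2$ is an irreducible Alexander polynomial with a single root $\beta$ on $\ss_+$ placed between consecutive roots of $\delta_1$ (a density result of Cha--Livingston). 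The Cochran--Lickorish-type bound $0\le\sigma_K(\omega)\le 2$ for such knots, combined with the forced odd jumps and $\sigma(1)=0$, pins down each jump pattern as an alternating sequence of $\pm 1$'s, and the insertion of $\beta$ shifts the parity of the tail; a suitable signed connected sum then has jumps $[2,\ldots,2,0,\ldots,0]$, and differencing consecutive such knots isolates a jump of $2$ at a single $\alpha_m$. If you want to salvage your route, you would need to carry out the integral realization in the style of Levine's computation of the algebraic concordance group, which is considerably more work than the paper's argument.
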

 
Before proceeding to the proof, we briefly present  background.  The signature of a knot, now viewed as $\sigma_K(-1)$, was first defined by Trotter~\cite{trotter}  and Murasugi~\cite{murasugi}.  The signature function is essentially due to Levine~\cite{levine} and Tristram~\cite{tristram}.  Milnor~\cite{milnor} defined a different set of invariants, now called {\it Milnor signatures}, and these were proved be equivalent to the jumps in the signature function by Matumoto~\cite{matumoto}. 
 
One can define the knot signature function as we do below, but without taking the two-sided average to make it balanced.  This yields a well-defined knot invariant.  However, it is not a knot concordance invariant.  In particular, it does not vanish for slice knots (knots that bound smooth embedded disks in $B^4$); specifically, there are slice knots having non-zero (unbalanced) signature functions~\cite{cha-livingston, levine3}.  In contrast to this, the (balanced) signature function induces a well-defined homomorphism from the knot concordance group to the set of functions on $\ss$.

\vskip.05in
\noindent{\it Acknowledgments.} Thanks are due to Jae Choon Cha for pointing out the result presented in Lemma~\ref{lem:irred}. 
 
 \section{Definition of the signature function}
 
 The signature function of a knot is defined in terms of the  Seifert matrix of the knot, $V_K$.  Associated to $V_K$ there  is the   matrix $$W_K(t) = (1-t)V_K + (1-t^{-1})V_K^{\sf T}$$ with entries in the field of fractions,  $\qq(t)$. This matrix is hermitian with respect to the involution of $\qq(t)$ induced by  $t \to t^{-1}$.  Substituting $\omega \in \ss$ for $t$ yields a complex hermitian matrix, having signature which we temporarily denote $s_K(\omega)$.  Then $\sigma_K$ is defined by
$$\sigma_K(e^{2 \pi i x}) = \frac{1}{2}\lim_{\epsilon\to 0^+} \left( s_K(e^{  2 \pi i (x+\epsilon)}) + s_K(e^{ 2 \pi i (x-\epsilon)} )\right).$$

For almost all $\omega \in \ss$, $W_K(\omega)$  nonsingular.  For these $\omega$,  $s_K(\omega) \equiv \text{rank}(W_K(\omega))\equiv 0 \mod 2$.  If follows that $\sigma_K(\omega)$ is an integer for all $\omega$.  Similarly, the jump function $J_K$ takes on integer values. 
 
\section{Proof of necessity}
The necessity of the first three conditions is well-known, with many references.  The fourth condition is also known, but is not stated explicitly in the literature.  Summary proofs are included for completeness.\vskip.05in
 
\noindent{\bf Property 1:} The necessity of Property (1) follows from the fact that a hermitian matrix and its complex conjugate have the same signature.\vskip.05in

\noindent{\bf Property 2:} If $\omega$ close to 0 with positive argument, we can use a Taylor approximation to write $\omega = 1 +   \nu i - \nu^2 g(\nu)$ where $\nu \in \rr_+$ is close to 0 and $g$ is a real-valued  differentiable function defined near 0.  In terms of $\nu$, 
$$W_K(\omega) = \nu i (-V_K + V_K^{\sf T}) + \nu^2g(\nu)(V_K +V_K^{\sf T}).$$
The signature of this matrix is the same as that of 
$$  i (-V_K + V_K^{\sf T}) - \nu g(\nu)(V_K +V_K^{\sf T}).$$
For a knot, the matrix $  i (-V_K + V_K^{\sf T}) $ is congruent to the direct sum of $2 \times 2$ matrices, each of the form

$$ \left( \begin{array}{cc}
0& i \\
-i & 0 \\
\end{array} \right).$$ 
(This is false for links.)  This is nonsingular with signature 0.  A small perturbation leaves the signature unchanged.\vskip.05in
    
\noindent{\bf Property 3:}    We can rewrite 
 $$W_K  = (1-t) (V_K - t^{-1}V_K^{\sf T}).$$
 This matrix is nonsingular except at roots of $\det(V_K - t^{-1}V_K^{\sf T})$ and  at $t = 1$, and hence the signature  is locally constant away from such roots.  We have just seen that $t=1$   is not a singular point: $\sigma_K(\omega) = 0$ for $\omega$ near 1.  The Alexander polynomial can be defined as $\Delta_K(t) = \det(V_K - tV_K^{\sf T})$.  Replacing $t$ with $t^{-1}$ does not change this determinant, modulo multiplication by $\pm t^k$ for some $k \in\zz$.  Thus, all singularities of the signature function occur at roots of $\Delta_K(t)$.\vskip.05in

\noindent{\bf Property 4:} Since $\qq(t)$ is a field of characteristic 0, the form $W_K$ can be diagonalized.  Thus, we can prove the necessity of Property (4) by verifying it for $1 \times 1$ forms and applying the additivity of signature.  In general, a $1 \times 1$ matrix  is given as $(h(t))$ for some rational function $h(t)$ that is invariant under the involution $t \to t^{-1}$.  A change of basis permits us to clear the denominator and all factors  of the form $g(t)g(t^{-1})$.  That is, we can assume $h(t)$ is a product of distinct irreducible symmetric polynomials.

Suppose that $\alpha_i$ and $\alpha_j$ are roots of the irreducible polynomial $\delta(t)$. After a change of basis,  the matrix is of the form $\left( f(t)\delta(t)^\epsilon \right)$, with $\epsilon = 0$ or $\epsilon =1$, and with $f(t)$ symmetric and relatively prime to $\delta(t)$.  In the first case,  $\epsilon =0$,  there is no jump at either $\alpha_i$ and the signature is $\pm 1$.   In the second case, the jumps are either $\pm 1$.  Thus, for $W_K$ the  jumps are equal mod 2.  


\section{Proof of sufficiency}

\subsection{Background}
The proof of sufficiency depends on some previously known facts which we collect here as a series of lemmas.

\begin{lemma}\label{lem:irred}  If $\alpha \in \ss$ is the root of an Alexander polynomial $\Delta(t)$, then it is the root of an irreducible Alexander polynomial.
\end{lemma}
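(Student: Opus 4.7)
My plan is to show that the minimal polynomial of $\alpha$ over $\qq$ itself is already an irreducible Alexander polynomial. Let $p(t) \in \zz[t]$ denote the primitive integer polynomial generating the ideal of polynomials vanishing at $\alpha$, unique up to sign. The first step is divisibility: since $\alpha$ is a root of $\Delta(t)$, we have $p(t) \mid \Delta(t)$ in $\qq[t]$, and Gauss's lemma upgrades this to $p(t) \mid \Delta(t)$ in $\zz[t]$. Because $\Delta(1) = \pm 1 \neq 0$, necessarily $\alpha \ne 1$, so $p(t) \ne t-1$ and hence $p(1) \ne 0$. Evaluating the divisibility at $t = 1$ then forces $p(1) = \pm 1$.

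The second step establishes palindromic symmetry. Since $\alpha \in \ss$ one has $\bar\alpha = \alpha^{-1}$, and since $p$ has real coefficients, $p(\alpha^{-1}) = \overline{p(\alpha)} = 0$. Setting $n = \deg p$ and $p^*(t) := t^n p(t^{-1})$, this says $p^*(\alpha) = 0$. The polynomial $p^*$ has degree exactly $n$, since otherwise $p(0) = 0$, forcing $p(t) = t$ and $\alpha = 0$, absurd. By irreducibility of $p$ and equality of degrees, $p^* = c\, p$ for some $c \in \qq^\times$, and comparing leading and constant coefficients forces $c = \pm 1$. The apparently delicate point, and the main obstacle I anticipate, is to exclude the anti-palindromic alternative $c = -1$; but that case would give $p(1) = -p(1)$, contradicting the first step. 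Hence $c = 1$ and $p(t) = t^n p(t^{-1})$.

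Combining the two steps, $p(t)$ is an integer polynomial with $p(1) = \pm 1$ and $p(t) = t^n p(t^{-1})$, so by Seifert's characterization it is an Alexander polynomial; it is irreducible by construction and vanishes at $\alpha$, proving the lemma. The real content of the argument is the interaction between the two halves of Seifert's criterion: the normalization $p(1) = \pm 1$ is precisely what rules out the anti-symmetric scenario that could otherwise obstruct the minimal polynomial from being Alexander.
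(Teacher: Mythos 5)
Your proposal is correct and follows essentially the same route as the paper: both take the irreducible (minimal) integer polynomial of $\alpha$, note that $\Delta(1)=\pm1$ forces its value at $1$ to be $\pm1$, derive $p^*(\alpha)=0$ from $\alpha^{-1}=\overline{\alpha}$ and realness of coefficients, and use $p(1)\ne 0$ to rule out the anti-palindromic case $p^*=-p$. Your write-up just makes explicit (via Gauss's lemma and the degree/constant-coefficient checks) some steps the paper labels as clear.
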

\begin{proof}  Suppose that $\delta_1(\alpha) = 0$, where $\delta_1(t)$ is an irreducible integral factor of $\Delta(t)$.  Clearly $\delta_1(1) = \pm 1$.  It remains to prove the symmetry of $\delta_1(t)$.  

Normalize $\delta_1(t)$ so that $\delta_1(t) \in \zz[t]$ with nonzero constant coefficient. Let $k$ denote the degree of $\delta_1(t)$ and let $\delta_2(t) = t^k\delta_1(t^{-1})$. We have $\delta_2(\alpha^{-1}) = 0$.  But $\alpha^{-1} = \overline{\alpha}$, so $\delta_2(\overline{\alpha}) = 0$.  On the other hand,   $\delta_1$ has real coefficients, so $\delta_1(\overline{\alpha}) = 0$.  Thus, $\delta_1(t) = a \delta_2(t)$ for some $a\in \qq$; that is, $\delta_1(t) = at^k \delta_1(t^{-1})$.  Letting $t = 1$, we have $\delta_1(1) = a\delta_1(1)$.  Since   $\delta_1(1) \ne 0$, we have $a = 1$, implying the symmetry of $\delta_1(t)$.
\end{proof}

\begin{lemma}\label{lem:jump} If $\omega \in \ss$ is the root of a symmetric irreducible polynomial $\delta(t)$ that has odd exponent as a factor of $\Delta_K(t)$, then $J_K(\omega) \equiv 1 \mod 2$.
\end{lemma}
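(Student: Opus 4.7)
The plan is to mimic the necessity proof of Property (4)---diagonalize $W_K$ over $\qq(t)$ and compute $J_K(\omega)$ as a sum of contributions from the $1 \times 1$ summands---then match parities with the $\delta$-adic valuation of $\det(W_K)$, which I will identify with $e$.

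First, diagonalize $W_K \cong \bigoplus_{i=1}^n (h_i(t))$ over $\qq(t)$, so each $h_i \in \qq(t)^\times$ is symmetric under $t \to t^{-1}$. For $t = e^{i\theta}$ near $\omega = e^{i\theta_0}$, the symmetry together with the reality of the coefficients forces $h_i(e^{i\theta}) \in \rr$, and $h_i$ has a zero or pole of order $v_\delta(h_i) \in \zz$ at $\omega$ (writing $v_\delta$ for the $\delta$-adic valuation on $\qq(t)^\times$). Consequently the $1\times 1$ form $(h_i)$ changes sign across $\omega$ if and only if $v_\delta(h_i)$ is odd, contributing $\pm 1$ to the jump in that case and $0$ otherwise. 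Hence
\[
J_K(\omega) \;\equiv\; \sum_i v_\delta(h_i) \;=\; v_\delta\!\left(\prod_i h_i\right) \pmod 2.
\]

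Second, I would relate $\prod_i h_i$ to $\det(W_K)$. The diagonalizing change of basis $P \in GL_n(\qq(t))$ yields $\prod_i h_i = \det(P(t^{-1}))\det(P(t)) \cdot \det(W_K)$, and the factor $\det(P(t))\det(P(t^{-1}))$ is a norm $p(t)p(t^{-1})$. Using the symmetry $\delta(t) = t^k \delta(t^{-1})$, one checks that any such norm has even $\delta$-adic valuation, so
\[
v_\delta\!\left(\prod_i h_i\right) \;\equiv\; v_\delta(\det W_K) \pmod 2.
\]

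Finally, $\det(W_K(t)) = (1-t)^n \det(V_K - t^{-1}V_K^{\sf T})$ differs from $\Delta_K(t) = \det(V_K - tV_K^{\sf T})$ only by factors of $(1-t)^n$ and a signed power of $t$; at $\omega \neq 0, 1$ neither affects $v_\delta$. Thus $v_\delta(\det W_K) = v_\delta(\Delta_K) = e$, which is odd by hypothesis, and therefore $J_K(\omega) \equiv 1 \pmod 2$. The main obstacle I anticipate is the norm-valuation claim in the second step: verifying that $v_\delta(p(t)p(t^{-1}))$ is even for every $p \in \qq(t)^\times$. This requires organizing the irreducible factors of $p$ into symmetric ones (where $\delta(t) = t^k\delta(t^{-1})$ contributes to $v_\delta$ from both $p(t)$ and $p(t^{-1})$ equally) and non-symmetric pairs $\{q(t), t^{\deg q}q(t^{-1})\}$ (which are distinct from $\delta$), and is routine but requires some bookkeeping.
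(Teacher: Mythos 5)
Your proposal is correct and follows essentially the same route as the paper: diagonalize $W_K$ over $\qq(t)$, observe that the determinant changes only by a norm $p(t)p(t^{-1})$ whose $\delta$-adic valuation is even, and conclude that an odd number of diagonal entries carry odd $\delta$-valuation, each contributing $\pm 1$ to the jump. You have simply filled in the details (the valuation bookkeeping and the sign-change analysis of the $1\times 1$ blocks) that the paper leaves implicit.
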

\begin{proof} The diagonalization process used in the proof of the necessity of Property (4)   does not change the exponent of $\delta(t)$ as a factor of the determinant, modulo 2.  Thus, after diagonalization there are an odd number of diagonal entries of the form $f(t)\delta(t)$, each of which contributes $\pm 1$ to the jump function.
\end{proof}

\begin{lemma}\label{lem:poly1} For every Alexander polynomial $\Delta(t)$, there is an unknotting number one knot $K$ with $\Delta_K(t) = \Delta(t)$.
\end{lemma}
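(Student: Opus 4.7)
The strategy is to construct $K$ by performing a single crossing change on the unknot, following the classical approach of Kondo and Sakai. By construction such a $K$ will satisfy $u(K) \le 1$, so the task reduces to arranging the local configuration of the crossing so that $\Delta_K = \Delta$; when $\Delta \ne 1$ the resulting knot is nontrivial, hence has unknotting number exactly one, and the case $\Delta = 1$ can be handled separately (any untwisted Whitehead double will do).

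First, I would reduce the problem to a question about Seifert matrices. The goal is to find an integer matrix $V$ such that $\det(V - tV^{\sf T}) = \pm t^k \Delta(t)$ for some $k$, \emph{and} such that $V$ differs from a Seifert matrix of the unknot (a standard hyperbolic form) by a rank-one perturbation supported in a single off-diagonal entry. Geometrically, this perturbation corresponds to a distinguished clasp on a Seifert surface $F$ for $K$; changing the crossing at that clasp yields a new surface whose Seifert form is that of the unknot, whose boundary is therefore $U$.

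Second, I would exhibit such a $V$ from the data of $\Delta$. Since $\Delta(t) = \Delta(t^{-1})$ and $\Delta(1) = \pm 1$, the Laurent polynomial $\pm \Delta(t) - 1$ vanishes to even order at $t = 1$ and so factors as $(1-t)(1-t^{-1}) p(t)$ for some symmetric $p \in \zz[t,t^{-1}]$. This factorization suggests a block decomposition of $V$: most of the matrix encodes $p(t)$ in a standardized way, while a single extra row and column introduce the $(1-t)(1-t^{-1})$ factor and correspond geometrically to the distinguished clasp.

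Third, I would realize $V$ by an explicit Seifert surface built as a disk with bands of the appropriate genus, twisted, knotted, and clasped so that their linking matrix is $V$. The main obstacle is the final geometric verification: switching the single distinguished crossing must produce a surface that simplifies, via band slides and isotopy, to an unknotted disk in $S^3$. This is the technical heart of the argument; it is accomplished in the Kondo--Sakai constructions by arranging the bands so that they can be slid apart as soon as the obstructing clasp is resolved.
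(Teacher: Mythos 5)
The paper proves this lemma by citing Kondo's theorem outright, and your argument likewise defers its technical heart to the Kondo--Sakai construction, so the two take essentially the same route. (One small correction to your supplementary sketch: since $V - V^{\sf T}$ is the intersection form of the Seifert surface and is unchanged by a crossing change, such a change cannot alter a single off-diagonal entry of $V$; it changes either one diagonal entry or the symmetric pair $V_{ij}$, $V_{ji}$ together.)
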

\begin{proof} This is a theorem of Kondo~\cite{kondo}.
\end{proof}

\begin{lemma}\label{lem:dense} There is a dense subset $\{\beta_i\}$ of  $\ss$, each of which is the root of a quartic Alexander polynomial having precisely two  roots on $\ss$. Consequently, for every $\omega \in \ss$, there is a $\omega'$ arbitrarily close to $\omega$ and a knot $K$ such that $\sigma_K$ has jumps only at $\omega'$ and $\overline{\omega'}$.
\end{lemma}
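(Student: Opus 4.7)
The plan is to produce the required family of quartic Alexander polynomials by an elementary two-integer-parameter construction, and then invoke Lemma~\ref{lem:poly1} to realize each as $\Delta_K$ for some knot $K$. The reduction tool is the substitution $u = t + t^{-1}$: a symmetric quartic $\Delta(t) = at^2 + bt + c + bt^{-1} + at^{-2}$ (with $a \ne 0$) corresponds to the quadratic $Q(u) = au^2 + bu + (c - 2a)$, and roots of $\Delta$ come in pairs $\{t, t^{-1}\}$ matching single roots of $Q$, with $\ss$ mapping onto $[-2,2]$. In particular $\Delta(\pm 1) = Q(\pm 2)$, and $\Delta$ has precisely two roots on $\ss$ exactly when $Q(2)\cdot Q(-2) < 0$.

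Fixing $\Delta(1) = 1$, I would consider the family
\[
\Delta_{a,k}(t) = at^2 + kt + (1 - 2a - 2k) + kt^{-1} + at^{-2}, \qquad a \in \zz \setminus \{0\},\ k \ge 1,
\]
for which $\Delta_{a,k}(1) = 1$ and $\Delta_{a,k}(-1) = 1 - 4k \le -3$, so the sign condition is automatic; let $\beta_{a,k}$ denote one of the two conjugate roots of $\Delta_{a,k}$ on $\ss$. Density of the $\beta_{a,k}$ follows from showing that the corresponding roots of $Q_{a,k}$ in $(-2,2)$ are dense. A direct computation yields $Q_{a,k}(u) = (u-2)[a(u+2) + k] + 1$. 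Given a target $u_0 \in (-2,2)$ and a large positive integer $N$, I would set $a = -N$ and take $k$ to be the nearest integer to $N(u_0+2) + 1/(2-u_0)$. This choice forces $k \ge 1$ once $N$ is large, keeps $|Q_{a,k}(u_0)| \le (2-u_0)/2$, and makes $|Q'_{a,k}(u_0)|$ grow linearly in $N$; a quadratic-formula estimate then places the root of $Q_{a,k}$ in $(-2,2)$ within distance $O(1/N)$ of $u_0$. Transferring back via $u = 2\cos\theta$ gives density of $\beta_{a,k}$ in $\ss \setminus \{\pm 1\}$, and hence in all of $\ss$.

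For the consequence, given $\omega \in \ss$ and $\varepsilon > 0$, I pick $\omega' = \beta_{a,k}$ with $|\omega - \omega'| < \varepsilon$, and Lemma~\ref{lem:poly1} delivers a knot $K$ with $\Delta_K = \Delta_{a,k}$. Property (3) established above then confines the jumps of $\sigma_K$ to roots of $\Delta_K$ on $\ss$, which are exactly $\omega'$ and $\overline{\omega'}$. The one delicate point is the density estimate of the previous paragraph, where one must verify that the chosen root of $Q_{a,k}$ actually lies in $(-2,2)$ and not just near $u_0$. This is handled cleanly by the sign change $Q_{a,k}(2) = 1 > 0 > 1 - 4k = Q_{a,k}(-2)$, which produces a genuine root in $(-2,2)$ by the intermediate value theorem, independently of the quantitative Newton-style bound used to locate it near $u_0$.
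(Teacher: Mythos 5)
Your proposal is correct, but it takes a genuinely different route only in the sense that the paper offers no argument at all here: Lemma~\ref{lem:dense} is disposed of by citing the Cha--Livingston paper, whereas you supply a complete, self-contained elementary construction. I checked the details. With $c = 1-2a-2k$ one indeed gets $Q_{a,k}(u) = au^2+ku+(1-4a-2k) = (u-2)\bigl[a(u+2)+k\bigr]+1$, so $Q_{a,k}(2)=\Delta_{a,k}(1)=1$ and $Q_{a,k}(-2)=\Delta_{a,k}(-1)=1-4k<0$; the sign change forces exactly one simple root of $Q_{a,k}$ in $(-2,2)$ and places the other real root outside $[-2,2]$, so for $a\ne 0$ the polynomial $\Delta_{a,k}$ is a quartic Alexander polynomial with precisely two (conjugate) roots on $\ss$. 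Your choice $a=-N$ and $k$ the nearest integer to $N(u_0+2)+1/(2-u_0)$ gives $|Q_{a,k}(u_0)|\le (2-u_0)/2$ while $Q'_{a,k}(u_0)=N(2-u_0)+O(1)$ and $Q''_{a,k}=-2N$, so the Newton-style estimate is legitimate (the correction $|Q''|\,|Q(u_0)|/|Q'(u_0)|^2=O(1/N)$ tends to $0$), and the quadratic formula confirms directly that the root in $(-2,2)$ is $u_0+O(1/N)$; pulling back along $u=2\cos\theta$ gives density in $\ss$. The concluding step via Kondo's theorem (Lemma~\ref{lem:poly1}) and Property (3) is exactly right. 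One cosmetic point: ``precisely two roots on $\ss$ exactly when $Q(2)Q(-2)<0$'' is only an implication in the direction you use, since a double root of $Q$ in $(-2,2)$ would also yield two distinct circle roots; this does not affect the argument. What your approach buys is independence from the external reference; what the citation buys the paper is brevity, plus the fact that the cited source also packages the properties the main proof later wants at $\beta$ (that $\beta$ is a root of an \emph{irreducible} Alexander polynomial with a unique root on the upper semicircle, and that the jump there is nontrivial), which in your setup would instead have to be extracted via Lemmas~\ref{lem:irred} and~\ref{lem:jump}.
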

\begin{proof} This is proved in~\cite{cha-livingston}\end{proof}

\begin{lemma}\label{lem:genus}If $K$ can be unknotted by changing one crossing from negative to positive, then $0\le \sigma_K(\omega) \le 2$ for all $\omega \in \ss$.
\end{lemma}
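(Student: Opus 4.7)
The plan is to prove a one-sided crossing-change inequality for the hermitian signature and apply it with the unknot as the comparison knot. Concretely, if $K_\pm$ differ by a single crossing change taking a negative crossing in $K_-$ to a positive crossing in $K_+$, then at every $\omega \in \ss$ at which both $W_{K_\pm}(\omega)$ are nonsingular one has
$$0 \le s_{K_-}(\omega) - s_{K_+}(\omega) \le 2.$$
Applying this with $K_- = K$ and $K_+$ the unknot (for which $s \equiv 0$) gives $s_K(\omega) \in \{0,2\}$ at generic $\omega$; averaging two one-sided limits as in the definition of $\sigma_K$ then gives $\sigma_K(\omega) \in \{0,1,2\}$, which is stronger than the stated conclusion.

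The key ingredient is to produce Seifert surfaces for $K_\pm$ whose Seifert matrices, viewed in a common basis, differ by an explicit rank-one, positive semidefinite perturbation. Place the distinguished negative crossing of $K_-$ inside a small disk with the standard two-band local model, and let $F$ be a Seifert surface for $K_-$ containing this disk. Attach a small tube to $F$ running just above the crossing disk, giving an enlarged Seifert surface $F_-'$ for $K_-$ of genus one greater. Untwisting the tube by a single half-twist converts $F_-'$ into a Seifert surface $F_+'$ for $K_+$, since the untwisting realises the crossing change in $S^3$. Choosing a basis for $H_1(F_-';\zz) = H_1(F_+';\zz)$ whose last element is the new tube loop, the Seifert matrices $V_-$ and $V_+$ agree in every entry except the last diagonal one, and the sign convention for a negative-to-positive crossing change makes that entry one larger in $V_-$, so $V_- - V_+ = e_n e_n^{\sf T}$.

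From here the argument is a linear-algebra computation. Substituting into the definition $W_K(\omega) = (1-\omega)V + (1-\omega^{-1})V^{\sf T}$ gives
$$W_{K_-}(\omega) - W_{K_+}(\omega) = \bigl[(1-\omega)+(1-\omega^{-1})\bigr]\,e_n e_n^{\sf T} = 2(1-\mathrm{Re}\,\omega)\, e_n e_n^{\sf T},$$
which for $\omega \in \ss\setminus\{1\}$ is positive definite of rank one. The eigenvalues of a nonsingular hermitian matrix $A$ and those of $A+P$ with $P$ positive semidefinite of rank one satisfy the interlacing inequalities $\lambda_i(A) \le \lambda_i(A+P) \le \lambda_{i-1}(A)$, from which $\sigma(A+P) - \sigma(A) \in \{0,2\}$. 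Setting $A = W_{K_+}(\omega)$ and $A+P = W_{K_-}(\omega)$ at generic $\omega$ completes the proof of the refined inequality.

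The main obstacle is in the second paragraph: carefully executing the tube-and-untwist construction and verifying that in the resulting pair of Seifert matrices the only change is in a single diagonal entry, with the correct sign. Once the rank-one positive semidefinite perturbation is in hand, the signature analysis is standard.
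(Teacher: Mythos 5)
Your overall strategy --- establishing the one-sided crossing-change inequality $0 \le s_{K_-}(\omega) - s_{K_+}(\omega) \le 2$ via a rank-one positive semidefinite perturbation of the hermitian form, then taking $K_+$ to be the unknot --- is a legitimate and classical route, and it is genuinely different from the paper's. The paper argues four-dimensionally: it quotes Taylor's genus bound to get $|\sigma_K(\omega)| \le 2$ from the punctured torus in $B^4$ bounded by an unknotting number one knot, and then pins down the sign by forming $K \cs J$ with an auxiliary knot $J$ (supplied by the earlier lemmas) that is unknotted by an oppositely signed crossing change and has $\sigma_J(\omega) = -2$; the two double points of opposite sign are surgered away to give $|\sigma_K(\omega) - 2| \le 2$. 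Your argument is three-dimensional and self-contained, avoids both Taylor's theorem and the dependence on the auxiliary knot $J$ (and hence on Lemmas~\ref{lem:poly1} and~\ref{lem:dense}), and yields the slightly sharper conclusion $\sigma_K(\omega) \in \{0,1,2\}$. The linear algebra in your third paragraph --- the computation $W_{K_-}(\omega) - W_{K_+}(\omega) = 2(1-\mathrm{Re}\,\omega)\,e_n e_n^{\sf T}$ and the interlacing-plus-parity argument giving a signature difference in $\{0,2\}$ wherever both forms are nonsingular --- is correct, and the passage to the balanced function $\sigma_K$ by averaging one-sided limits is fine.

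The gap is exactly where you flagged it, and it is a real one: the tube-and-untwist construction cannot produce a Seifert surface for $K_+$. A tube attached in the interior of $F$ is disjoint from $\partial F = K_-$, so any modification of that tube --- twisting, untwisting, or re-embedding it --- leaves the boundary equal to $K_-$ and cannot realize the crossing change; moreover ``untwisting a tube by a half-twist'' is not a well-defined operation on an annulus. The standard correct construction goes through the oriented resolution instead. Smoothing the distinguished crossing of the knot produces a two-component link $K_0$; choose a connected Seifert surface $F_0$ for $K_0$ meeting the crossing ball only in the two resolved arcs, and form $F_\pm$ by attaching to $F_0$ a small band inside the ball carrying a half-twist of the appropriate sign, so that $\partial F_\pm = K_\pm$. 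Then $H_1(F_\pm) \cong H_1(F_0) \oplus \zz\langle x \rangle$ with $x$ the curve running once through the band; since the two bands differ only by a full twist supported in a ball disjoint from all other generators and their pushoffs, every Seifert pairing is unchanged except the self-pairing $\mathrm{lk}(x, x^+)$, which changes by exactly $1$ and with the sign you need (check against the trefoils). With that substitution your proof is complete; this is essentially the classical argument of Murasugi and of Cochran--Lickorish for $\sigma_K(-1)$, transported to the Tristram--Levine setting.
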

\begin{proof} A proof of this for the classical signature, $\sigma_K(-1)$, appears in~\cite{cochran-lickorish}.  That proof can be generalized by considering $p$--fold covers rather than $2$--fold covers.  Here is a brief argument, similar to one given in~\cite{livingston-tau} for the Heegaard Floer $\tau$--invariant.

Fix the value of $\omega$.   Since $K$ can be unknotted with one crossing change, it bounds a punctured torus embedded in $B^4$, and thus $\left|\sigma_K(\omega)\right| \le 2$ (see for instance,~\cite{taylor}).   By the previous lemmas, there is a knot $J$ that can be unknotted with a single crossing change from positive to negative having $\sigma_J(\omega) = -2$.   The knot $K \cs J$ can be unknotted with two crossing changes of opposite sign.  Thus, it bounds a disk in $B^4$ with two double points  having opposite sign.  This surface can be surgered  to eliminate both double points using the neighborhood of an arc on the surface joining the two double points.  This surgery yields a punctured torus bounded by $K \cs J$.
 It follows that $\left|  \sigma_K(\omega) + \sigma_J(\omega)\right| \le 2$.  The rest is arithmetic.
\end{proof}

\subsection{Proof of sufficiency}
The following simple observation, though not needed in the proof, might clarify the  conditions on $\sigma$ given in the statement of Theorem~\ref{thm:main}.  Any step function $\sigma$ satisfying the stated properties is necessarily even-valued away from its discontinuities and for all $\alpha \in \ss$, $J(\alpha) \equiv \sigma(\alpha) \mod 2$.

Since the signature and jump functions are additive under connected sums of knots, we can focus on signature functions whose jumps occur at the roots of a single irreducible Alexander polynomial.  Let $\delta_1(t)$ be an irreducible Alexander polynomial having roots $\{\alpha_1, \ldots, \alpha_k\}$ on the {\it upper} unit circle, $\ss_+$.  By Lemmas~\ref{lem:jump}~and~\ref{lem:poly1}, there is a knot $K$ whose signature function has an odd jump at each $\alpha_i$.  With these observations, the proof of sufficiency is reduced to the following theorem.

\begin{theorem} For each $\alpha_m$, there exists a knot $K$ with jump function satisfying $J_K(\alpha_i) = 2$ and $J_K(\omega) = 0$ if $\omega \ne \alpha_i$.
\end{theorem}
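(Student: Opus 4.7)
The plan is to realize $K$ as a connected sum of four unknotting-number-one knots, arranged so that all contributions to the jump function cancel away from $\alpha_m$, while two of them reinforce at $\alpha_m$ to give a total jump of $\pm 2$. The key structural fact is that for any unknotting number one knot $L$ with $0\le\sigma_L\le 2$ (which Lemma~\ref{lem:genus} delivers, after replacing $L$ by its mirror if needed), the unbalanced signature $s_L$ takes only the values $0$ and $2$ away from its discontinuities, so as $\omega$ traverses $\ss$ counterclockwise from $1$ through the upper semicircle, the values of $J_L$ at the successive unit-circle roots of $\Delta_L$ form an alternating sequence $+1,-1,+1,-1,\dots$ beginning with $+1$.

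Enumerate $\alpha_1,\dots,\alpha_k$ in cyclic order on the upper semicircle and choose, via Lemma~\ref{lem:poly1} (and a mirror if needed), an unknotting number one knot $K_0$ with $\Delta_{K_0}=\delta_1$ and $0\le\sigma_{K_0}\le 2$, so that $J_{K_0}(\alpha_i)=(-1)^{i+1}$. Use Lemma~\ref{lem:dense} to select points $\beta_1,\beta_2$ on the upper semicircle close enough to $\alpha_m$ that $\beta_1$ lies strictly between $\alpha_{m-1}$ and $\alpha_m$ while $\beta_2$ lies strictly between $\alpha_m$ and $\alpha_{m+1}$ (with the obvious adjustments when $m=1$ or $m=k$), and let $q_1,q_2$ be the associated quartic Alexander polynomials with $\beta_j,\overline{\beta_j}$ as their only unit-circle roots. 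Build an unknotting number one knot $K_2$ with $\Delta_{K_2}=\delta_1 q_1 q_2$ via Lemma~\ref{lem:poly1}, and take the knots $K^{(1)},K^{(2)}$ with $\Delta_{K^{(j)}}=q_j$ from Lemma~\ref{lem:dense}, all replaced by their mirrors if needed so that the signature functions lie in $[0,2]$ and $J_{K^{(j)}}(\beta_j)=+1$. Applying the alternating-jump principle to $K_2$, and observing that the insertion of $\beta_1$ just before $\alpha_m$ and $\beta_2$ just after flips the parity of the position of $\alpha_m$ while leaving the parity of every other $\alpha_i$ unchanged, one finds
$$J_{K_2}(\alpha_i)=J_{K_0}(\alpha_i)\ (i\ne m),\quad J_{K_2}(\alpha_m)=-J_{K_0}(\alpha_m),\quad J_{K_2}(\beta_1)=J_{K_2}(\beta_2)=J_{K_0}(\alpha_m).$$

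Form the connected sum of $K_2$, the mirror of $K_0$, and $K^{(1)},K^{(2)}$ (or their mirrors), with the latter two signs chosen so that the $K^{(j)}$-contribution to the jump at $\beta_j$ equals $-J_{K_0}(\alpha_m)$. Additivity of jumps under connected sum, combined with the fact that mirroring negates the jump function, cancels the jumps at every $\alpha_i$ with $i\ne m$, cancels the jumps at $\beta_1$ and $\beta_2$, and leaves $-2J_{K_0}(\alpha_m)=\pm 2$ at $\alpha_m$; a final mirror if necessary yields the desired $K$ with $J_K(\alpha_m)=2$ and $J_K(\omega)=0$ at every other point of the upper semicircle (with $J_K(\overline{\alpha_m})=-2$ forced by $J_K(\overline\omega)=-J_K(\omega)$). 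The main obstacle, and the reason for introducing two auxiliary roots rather than one, is parity bookkeeping: inserting only one $\beta$ would shift the positions, and hence flip the signs of the jumps of $K_2$, at every $\alpha_i$ on one side of it, whereas we need to flip only the sign at $\alpha_m$; placing one $\beta$ on each side of $\alpha_m$ localizes the flip there, and the density provided by Lemma~\ref{lem:dense} is precisely what allows the $\beta$'s to be squeezed into the arcs immediately flanking $\alpha_m$ without dislodging any other $\alpha_i$.
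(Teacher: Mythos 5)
Your proposal is correct and is essentially the paper's argument: it rests on the same four lemmas and the same alternating-jump principle ($0\le\sigma\le 2$ plus odd jumps forces the pattern $+1,-1,+1,\dots$ along $\ss_+$), the only difference being that you insert two auxiliary roots $\beta_1,\beta_2$ flanking $\alpha_m$ to localize the sign flip in a single four-knot sum, whereas the paper inserts one $\beta$ after $\alpha_m$, builds a partial-sum knot $J_m$ from three knots, and then takes the telescoping difference $J_m \cs -J_{m+1}$. One small patch: the knots $K^{(j)}$ should be obtained from Lemma~\ref{lem:poly1} applied to $q_j$ (as you do for $K_0$ and $K_2$), not directly from Lemma~\ref{lem:dense}, since only then do Lemmas~\ref{lem:genus} and~\ref{lem:jump} guarantee $J_{K^{(j)}}(\beta_j)=\pm 1$, which your cancellation at $\beta_j$ requires.
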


\begin{proof}  Assume the set of numbers $\{\alpha_1, \ldots, \alpha_k\}$ is ordered by increasing argument.  We focus on one element of the set, $\alpha_m$.  Choose a $\beta$ with argument between that of $\alpha_{m}$ and $\alpha_{m+1}$.  (In the case $m=k$, choose $\beta\in\ss_+$ with argument greater than that of $\alpha_k$.)  Furthermore, according to Lemma~\ref{lem:dense} we can assume $\beta $ is the root of an irreducible Alexander polynomial, $\delta_2(t)$, having a unique root on the upper unit circle $\ss_+$.   

Let $K_1$, $K_2$, and $K_3$ be unknotting number one knots having Alexander polynomials $\delta_1(t)$, $\delta_2(t)$, and $\delta_1(t)\delta_2(t)$, repectively.  By changing orientation if necessary, we can assume that each is unknotted by changing a negative crossing to positive.  The signature functions for these knots can have nontrivial jumps only at elements of the set $\{\alpha_1, \alpha_2, \ldots, \alpha_{m}, \beta, \alpha_{m+1}, \ldots, \alpha_k\}$.  By Lemmas~\ref{lem:jump}~and~\ref{lem:genus}, the jump functions of each must be as follows.  (We write the jump at $\beta$ in bold to highlight its location in the list in position $(m+1)$; the entire list is an ordered $(k+1)$--tuple.)

\begin{itemize}
\item Jumps for $\sigma_{K_1}$: $[ 1, -1, \ldots, (-1)^{m+1}, {\bf 0}, (-1)^{m}, \ldots, (-1)^{k+1}]$.\vskip.05in
\item Jumps for $\sigma_{K_2}$: $[ 0, 0, \ldots, 0, {\bf 1},0, \ldots, 0]$.\vskip.05in
\item Jumps for $\sigma_{K_3}$: $[ 1, -1, \ldots, (-1)^{m+1}, {\bf (-1)^{m}}, (-1)^{m+1}, \ldots, (-1)^{k}]$.\vskip.05in
\end{itemize}

We now see that the jumps for the connected sum $J_m =K_1 \cs - (-1)^m K_2 \cs K_3$ are given by
$$[ 2, 2, \ldots, 2, {\bf 0},0, \ldots, 0].$$
Since the jumps for this knot occur only at $\alpha_i$, we list the jumps at those points as a $k$--tuple:  $$[2,2,\ldots,2,0, \ldots, 0].$$  The last nonzero entry is in the $m$  position.

This entire construction can be repeated with $m$ incremented by $1$, building a knot $J_{m+1}$.  The knot $J_m \cs -J_{m+1}$ is our desired knot.
\end{proof}




\begin{thebibliography}{BBVB}

\bibitem{cha-livingston} J.~C.~Cha and C.~Livingston, {\em Knot signature functions are independent}, Proc. Amer. Math. Soc. {\bf 132} (2004),  2809--2816.

\bibitem{cochran-lickorish} T.~Cochran and W.~B.~R.~Lickorish, {\em  Unknotting information from 4-manifolds}  Trans. Amer. Math. Soc. {\bf 297} (1986)  125--142.

\bibitem{kondo}  H.~Kondo, {\em Knots of unknotting number 1 and their Alexander polynomials}, Osaka J. Math. {\bf 16} (1979), 551--559.

\bibitem{levine2} J.~Levine, {\em A characterization of knot polynomials}, Topology {\bf 4 } (1965) 135--141.

\bibitem{levine}  J.~Levine,  {\em Invariants of knot cobordism}, Invent.~Math. {\bf 8} (1969), 98--110.

\bibitem{levine3} J.~Levine, {\em Metabolic and hyperbolic forms from knot theory}, J. Pure Appl. Algebra {\bf 58} 
(1989), 251--260.

\bibitem{livingston-tau} C.~Livingston, {\em Computations of   the Ozsv\'ath-Szab\'o knot concordance invariant}, Geom. Topol. {\bf 8} (2004) 735--742.

\bibitem{matumoto} Y.~Matumoto, {\em On the signature invariants of a non-singular complex sesquilinear form,} J. Math. Soc. Japan {\bf 29} (1977) 67--71.

\bibitem{milnor}  J.~ Milnor, {\em Infinite cyclic coverings}, Conference on the Topology of Manifolds (Michigan State Univ., E. Lansing, Mich., 1967) Prindle, Weber \& Schmidt, Boston, Mass., (1968)  11--133. 

\bibitem{murasugi}  K.~Murasugi, {\em  On a certain numerical invariant of link types}, Trans. Amer. Math. Soc. {\bf 117} (1965) 387--422. 

\bibitem{seifert} H.~Seifert, {\em  \"Uber das Geschlecht von Knoten}, Math. Ann. {\bf 110}   571--592.

\bibitem{taylor} L.~Taylor, {\em  On the genera of knots},  Topology of low-dimensional manifolds (Proc. Second Sussex Conf., Chelwood Gate, 1977),  144--154, Lecture Notes in Math., {\bf 722} , Springer, Berlin, 1979.

\bibitem{tristram} A. Tristram, {\em Some cobordism invariants for links},   Proc. Camb. Phil. Soc. {\bf 66} (1969), 251--264.

\bibitem{trotter} H.~Trotter, {\em Homology of groups systems with applications to knot theory}, Ann. of Math. {\bf 76} (1962) 464--498.

\end{thebibliography}
\end{document}